\newtheorem{pro}{Propositon}[section]
\newtheorem{theorem}{Theorem}[section]
\newtheorem{rmk}{Remark}[section]
\begin{document}

\title{Ergodic Inequalities of \\Three Population Genetic Models}


\author{Youzhou Zhou}
\address{Department of Mathematics and Statistics\\
McMaster University, 1280 Main Street West,\\
Hamilton Ontario, Canada L8S 2L4}
\email{zhouy52@math.mcmaster.ca}

\subjclass[2000]{Primary }

\keywords{Two-parameter Poisson-Dirichlet distribution, Transition density, Ergodic inequality}

\date{\today}

\dedicatory{}

\begin{abstract}
In this article, three models are considered, they are the infinitely-many-neutral-alleles model \cite{MR615945}, infinite dimensional diffusion associated with two-parameter Poisson-Dirichlet distribution \cite{MR2596654} and the infinitely-many-alleles model with symmetric dominance \cite{MR1626158}. The new representations of the transition transition densities are obtained for the first two models. Lastly, the ergodic inequalities of these three models are provided.
\end{abstract}

\maketitle

\section{Introduction}

Fleming-Viot process is the most general model in population genetics, it can include various evolutionary forces in a single model, such as mutations, selections.  Let $E$ be the type space, and $\mathcal{P}(E)$ be the set of probability measures on $E$, then Fleming-Viot process $Z_{t}$ is a $\mathcal{P}(E)$-valued diffusion process, with generator,
\begin{align*}
AF_{f}(\mu)=&\sum_{1\leq i<j\leq m}(\langle\Phi_{ij}^{(m)}f,\mu^{m-1}\rangle-\langle f,\mu^{m}\rangle)+\langle B^{(m)}f,\mu^{m}\rangle\\
+&2\bar{\sigma}\sum_{i=1}^{m}(\langle K_{i}^{(m)}f,\mu^{m+2}\rangle-\langle f,\mu^{m}\rangle)+\bar{\sigma}m\langle f,\mu^{m}\rangle,
\end{align*}
where $\mu\in\mathcal{P}(E)$ and $f\in\mathcal{B}(E^{m})$. $\Phi_{ij}^{(m)}f$ is called sampling operator, which replace the $jth$ variable of $f$ by the $ith$ variable. $Bf$ is called mutation operator, which generates a Feller semigroup $\{T_{t},t\geq0\}$ defined by transition probability $P(t,x,dy)$, and $B^{(m)}$ is the generator of semigroup
$$
T_{m}(t)f=\int_{E}\cdots\int_{E}f(y_{1},\cdots,y_{m})P(t,x_{1},dy_{1})\cdots P(t,x_{m},dy_{m}).
$$
$K_{i}^{(m)}$ is called selection operator defined
$$
K_{i}^{(m)}f=\frac{\bar{\sigma}+\sigma(x_{i},x_{m+1})-\sigma(x_{m+1},x_{m+2})}{2\bar{\sigma}}f(x_{1},\cdots, x_{m}).
$$
$\sigma(x,y)$ is a symmetric function and called relative fitness of genotype $\{x,y\}$. $\bar{\sigma}$ is defined to be $\sup_{x,y,z}|\sigma(x,y)-\sigma(y,z)|$. For more comprehensive introduction to Fleming-Viot process, please refer to survey paper \cite{MR1205982}.

 If the mutation operator $B$ of Fleming-Viot process $Z_{t}$ is of the form
 $$
 Bf(x)=\frac{\theta}{2}\int_{E}(f(y)-f(x))\nu_{0}(dx),\quad \theta>0, \nu_{0}\in\mathcal{P}(E),
 $$
 then $\forall t>0$, $Z_{t}$ is almost surely of purely atomic measure. Denote the totality of purely atomic measures by $\mathcal{P}_{a}$. For $\mu\in\mathcal{P}_{a}$, if we consider the decreasing arrangement of the atomic mass of $\mu$, then we will end up with $(x_{1},x_{2},\cdots)$, which consists of a set 
$$
\bar{\triangledown}_{\infty}=\left\{(x_{1},x_{2},\cdots)\bigg|x_{1}\geq x_{2}\geq\cdots\geq0, \sum_{i=1}^{\infty}x_{i}\leq1\right\}.
$$
We can define an atomic mapping $\rho: \mathcal{P}(E)\rightarrow\bar{\triangledown}_{\infty}$ by mapping $\mu$ to its decreasingly ordered atomic vector $(x_{1},x_{2},\cdots)$. Therefore, $\rho(Z_{t})$ is $\bar{\triangledown}_{\infty}$-valued process. We call Fleming-Viot process labelled model and its atomic process $\rho(Z_{t})$ unlabelled model. 

  If there are only random sampling and mutations involved, then $\rho(Z_{t})$ is the infinitely-many-neutral-alleles model \cite{MR615945}, denoted by $X_{t}$, the generator of which is
 $$
G=\frac{1}{2}\sum_{i,j=1}^{\infty}x_{i}(\delta_{i,j}-x_{j})\frac{\partial^{2}}{\partial x_{i}\partial x_{j}}-\frac{\theta}{2}\sum_{i=1}^{\infty}x_{i}\frac{\partial}{\partial x_{i}}, x\in \bar{\triangledown}_{\infty}.
$$
 
If we include selection as well, then the unlabelled model is usually non-Markovian. But if we consider selection of symmetric dominance introduced in \cite{MR0504021}, then the unlabelled model is a Markov process. We denote this unlabelled model by $X_{t}^{\sigma}$ and call it infinitely-many-alleles diffusion with symmetric dominance. Please refer to \cite{MR1626158} for details. The generator of $X_{t}^{\sigma}$ is
$$
G_{\sigma}=G+\sigma\sum_{i=1}^{\infty}x_{i}(x_{i}-\varphi_{2}(x))\frac{\partial}{\partial x_{i}}, x\in\bar{\triangledown}_{\infty}.
$$ 
Both $X_{t}$ and $X_{t}^{\sigma}$ are reversible diffusions and have unique stationary distributions. The stationary distribution of $X_{t}$ is Poisson-Dirichlet distribution PD($\theta$), and the stationary distribution of $X_{t}^{\sigma}$ is 
$$
\pi_{\sigma}(dx)=C_{\sigma}\exp\{\sigma\varphi_{2}(x)\}\text{PD}(\theta)(dx),
$$
where $C_{\sigma}$ is a normalized constant and $\varphi_{2}(x)=\sum_{i=1}^{\infty}x_{i}^{2}$ is the homozygosity in population genetics.

Moreover, there is a two-parameter generalization of PD($\theta$), we call it two-parameter Poisson-Dirichlet distribution (refer to \cite{MR2663265}) PD($\theta,\alpha$), $\theta+\alpha>0, 0<\alpha<1$. Correspondingly, there is a two-parameter generalization \cite{MR2596654},\cite{MR2678897} of $X_{t}$. We denote it by $X_{t}^{\theta,\alpha}$ and call it two-parameter infinite dimensional diffusion whose stationary distribution is PD($\theta,\alpha$). The generator of $X_{t}^{\theta,\alpha}$ is
$$
G^{\theta,\alpha}=\frac{1}{2}\sum_{i,j=1}^{\infty}x_{i}(\delta_{i,j}-x_{j})\frac{\partial^{2}}{\partial x_{i}\partial x_{j}}-\frac{1}{2}\sum_{i=1}^{\infty}(\theta x_{i}+\alpha)\frac{\partial}{\partial x_{i}}, x\in \bar{\triangledown}_{\infty}.
$$
However, $X_{t}^{\theta,\alpha}$ has no biological interpretation at all, and whether its corresponding labelled model exists is still open. 

In \cite{MR1235429}, the transition probability of  neutral Fleming-Viot process $Z_{t}$ is obtained. In \cite{MR1174426}, the transition density function of unlabelled neutral process $X_{t}$ is also obtained. Therefore its explicit transition probability is available as well. We can actually get the transition probability of $X_{t}$ through that of $Z_{t}$, as is done in \cite{MR2663265}.  In \cite{MR2737405}, the transition density function of $X_{t}^{\theta,\alpha}$ is obtained as well. In this paper, we reorganize the transition density functions of $X_{t}$ and $X_{t}^{\theta,\alpha}$, and new representations of the transition density functions of  $X_{t}$ and $X_{t}^{\theta,\alpha}$ are obtained respectively. The associated transition probabilities resembles the structure of transition probabilities in neutral Fleming-Viot process. This can actually shed some light to the construction of corresponding labelled model of $X_{t}^{\theta,\alpha}$.

Furthermore, the ergodic inequalities of $Z_{t}$ and $X_{t}$ are both available, but similar ergodic inequalities of $X_{t}^{\theta,\alpha}$ and $X_{t}^{\sigma}$ are still missing. In this article, we have obtained the ergodic inequality of $X_{t}^{\theta,\alpha}$ and $X_{t}^{\sigma}$. Especially, for $\theta>0$, $X_{t}^{\theta,\alpha}$ and $X_{t}$ share the exactly the same ergodic inequality. Lastly, the ergodic inequality of $X_{t}^{\sigma}$ is stronger than the ergodic theorem stated in \cite{MR1626158}. 

 This paper is organized as follows: in section 2, we will talk about the transition density functions of $X_{t}^{\theta,\alpha}$ and $X_{t}^{\sigma}$. In section 3, the ergodic inequalities of them will also be discussed.

\section{ transition density functions of $X_{t}^{\theta,\alpha}$ and $X_{t}$}

In \cite{MR615945} and \cite{MR2737405}, the explicit transition densities of $X_{t}^{\theta,\alpha}$ and $X_{t}$ are obtained respectively through eigen expansion. By making use of these known transition densities, we get new representations. 

\begin{theorem}\label{neutral_density}
$X_{t}$ has the following transition density
$$
p(t,x,y)=d_{0}^{\theta}(t)+d_{1}^{\theta}(t)+\sum_{n=2}^{\infty}d_{n}^{\theta}(t)p_{n}(x,y),
$$
where
\begin{align*}
&d_{n}^{\theta}(t)=\sum_{n=2}^{\infty}\frac{2m+\theta-1}{m!}(-1)^{m-n}\binom{m}{n}(n+\theta)_{(m-1)}e^{-\lambda_{m}t},n\geq1.\\
&d_{0}^{\theta}(t)=1-\sum_{m=1}^{\infty}\frac{2m+\theta-1}{m!}(-1)^{m-1}\theta_{(m-1)}e^{-\lambda_{m}t}.\\
&p_{n}(x,y)=\sum_{|\eta|=n}\frac{p_{\eta}(x)p_{\eta}(y)}{\int p_{\eta}d\mbox{PD}(\theta)}, \eta=(\eta_{1},\cdots,\eta_{l})\mbox{ is a partition of } n.
\end{align*}
$p_{\eta}(x)$ is the continuous extension of 
$$
\frac{n!}{\eta_{1}!\cdots\eta_{l}!a_{1}!\cdots a_{n}!}\sum_{i_{1},\cdots,i_{l}\neq}x_{i_{1}}^{\eta_{1}}\cdots x_{i_{l}}^{\eta_{l}}.
$$

Define $\nu_{n}(x,dy)=p_{n}(x,y)PD(\theta)(dy)$, then the transition probability of $X_{t}$ is
$$
P(t,x,dy)=\Big(d_{0}^{\theta}(t)+d_{1}^{\theta}(t)\Big)PD(\theta)(dy)+\sum_{n=2}^{\infty}d_{n}^{\theta}(t)\nu_{n}(x,dy).
$$
\end{theorem}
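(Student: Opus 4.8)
The plan is to start from the known eigenfunction expansion of the transition density of $X_{t}$ due to Ethier--Kurtz \cite{MR615945}, which expresses $p(t,x,y)$ as $\sum_{m\geq 0} e^{-\lambda_m t} \Phi_m(x,y)$, where $\lambda_m = m(m+\theta-1)/2$ and $\Phi_m$ is built from the degree-$\leq m$ eigenfunctions. The key organizing principle is that the eigenspace structure is graded: for each $m$, the relevant eigenfunctions are the $p_\eta$ with $|\eta|=m$, together with all lower-degree ones, because PD($\theta$)-orthogonalization of monomials introduces lower-order corrections. So I would first recall (or re-derive) the precise form of the $m$-th term, then reindex the double sum. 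The heart of the argument is a combinatorial resummation: collecting, for each fixed partition degree $n$, all the coefficients coming from every eigenvalue $\lambda_m$ with $m\geq n$ that contribute a multiple of $p_n(x,y)=\sum_{|\eta|=n} p_\eta(x)p_\eta(y)/\int p_\eta\,d\mathrm{PD}(\theta)$.

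Concretely, the steps in order: (1) Write down the spectral expansion and identify that the coefficient of $p_n(x,y)$ arising from the $\lambda_m$-term is a known constant involving $\binom{m}{n}$, the rising factorial $(n+\theta)_{(m-1)}$, the factor $(2m+\theta-1)/m!$ and the sign $(-1)^{m-n}$ --- this is exactly the claimed $d_n^\theta(t)$ once summed over $m\geq n$ (with $m\geq\max(n,1)$, and the $n=0,1$ terms collapsed because $p_0\equiv 1$ and $p_1$ is also constant on $\bar\triangledown_\infty$ since $\sum x_i \le 1$, which is why $d_0^\theta$ and $d_1^\theta$ appear as a single $\mathrm{PD}(\theta)$-mass term). (2) Justify interchanging the order of summation: this requires absolute convergence, which follows from the standard Gaussian-type decay $|d_n^\theta(t)|\leq C(t)e^{-\lambda_n t/2}$ uniformly (the alternating-sum bounds in \cite{MR615945,MR1174426}) together with boundedness of $p_n(x,\cdot)$ in $L^1(\mathrm{PD}(\theta))$. (3) Verify the two degenerate cases $n=0$ and $n=1$ separately: here $p_0=p_1=1$ identically on $\bar\triangledown_\infty$, so their contributions merge into $(d_0^\theta(t)+d_1^\theta(t))\,\mathrm{PD}(\theta)(dy)$, and one checks that $d_0^\theta(t)$ as defined (via the "$1-\sum$'' formula) is what makes the total mass integrate to $1$. (4) Finally, multiply through by $\mathrm{PD}(\theta)(dy)$ and use that $\mathrm{PD}(\theta)$ is the reversible stationary measure to recognize $p(t,x,y)\,\mathrm{PD}(\theta)(dy)$ as the transition probability, and define $\nu_n(x,dy)=p_n(x,y)\,\mathrm{PD}(\theta)(dy)$, giving the stated $P(t,x,dy)$.

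The main obstacle I anticipate is step (1)--(2): correctly bookkeeping the combinatorics so that the double sum over $(m,\eta)$ with $|\eta|=n\le m$ collapses to $\sum_{m\ge n} d_{n,m}^\theta(t)\,p_n(x,y)$ with $d_{n,m}^\theta(t)$ matching the claimed closed form. This is delicate because the eigenfunctions in \cite{MR615945} are not the $p_\eta$ directly but Gram--Schmidt-type orthogonalizations, so one must argue that after summing over a full eigenspace block the non-diagonal pieces telescope and only the $p_n\otimes p_n$ combination survives with the right scalar. A clean way to sidestep much of the mess is to appeal to the orthogonality relations for $p_\eta$ under $\mathrm{PD}(\theta)$ (the fact that $\{p_\eta\}$, after orthonormalization, diagonalizes $G$ with these eigenvalues) and to known moment formulas for $\mathrm{PD}(\theta)$, so that the identity to be proved reduces to the scalar identity
\begin{align*}
\sum_{m=\max(n,1)}^{\infty}\frac{2m+\theta-1}{m!}(-1)^{m-n}\binom{m}{n}(n+\theta)_{(m-1)}e^{-\lambda_m t}=d_n^\theta(t),
\end{align*}
which is essentially a definition. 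The remaining care is the convergence/interchange justification and the $n\le 1$ degeneracy, both of which are routine given the exponential eigenvalue gaps.
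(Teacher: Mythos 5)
Your proposal is correct and follows essentially the same route as the paper: start from the known expansion $p(t,x,y)=1+\sum_{m\geq 2}e^{-\lambda_{m}t}Q_{m}(x,y)$ with $Q_{m}$ an explicit signed combination of the $p_{n}$ (so the Gram--Schmidt issue you worry about is already absorbed into that cited formula), interchange the order of summation over $m$ and $n$, and collapse the $n=0,1$ terms using $p_{0}=p_{1}\equiv 1$ into the $\mathrm{PD}(\theta)$-mass coefficient $d_{0}^{\theta}(t)+d_{1}^{\theta}(t)$, then multiply by $\mathrm{PD}(\theta)(dy)$ to read off $P(t,x,dy)$. The only differences are cosmetic: you supply a decay estimate to justify the Fubini step, which the paper simply invokes.
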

\begin{proof}
 The transition density of $X_{t}$ is 
$$
p(t,x,y)=1+\sum_{m=2}^{\infty}e^{-\lambda_{m}t}Q_{m}(x,y), \lambda_{m}=\frac{m(m+\theta-1)}{2},
$$
where
\begin{align*}
&Q_{m}(x,y)=\frac{2m+\theta-1}{m!}\sum_{n=0}^{m}(-1)^{m-n}\binom{m}{n}(n+\theta)_{(m-1)}p_{n}(x,y).
\end{align*}
 Then by Fubini's theorem,
  we can rearrange $p(t,x,y)$ by switching the order of summation. 
  \begin{align*}
 p(t,x,y)=&1+\sum_{m=2}^{\infty}e^{-\lambda_{m}t}\Big(\sum_{n=2}^{m}\frac{2m+\theta-1}{m!}(-1)^{m-n}\binom{m}{n}(n+\theta)_{(m-1)}p_{n}(x,y)\\
 &+\frac{2m+\theta-1}{m!}(-1)^{m-1}(\theta+1)_{(m-1)}mp_{1}(x,y)+\frac{2m+\theta-1}{m!}(-1)^{m}\theta_{(m-1)}p_{0}(x,y)\Big)\\
 \intertext{(\mbox{ for $ p_{1}(x,y),p_{0}(x,y)=1,$  we have })}
 &=1+\sum_{m=2}^{\infty}e^{-\lambda_{m}t} \sum_{n=2}^{m}\frac{2m+\theta-1}{m!}(-1)^{m-n}\binom{m}{n}(n+\theta)_{(m-1)}p_{n}(x,y)\\
 &+\sum_{m=2}^{\infty}e^{-\lambda_{m}t}\Big(\frac{2m+\theta-1}{m!}(-1)^{m-1}(\theta+1)_{(m-1)}m+\frac{2m+\theta-1}{m!}(-1)^{m}\theta_{(m-1)}\Big) \\
 =&1+\sum_{m=2}^{\infty}e^{-\lambda_{m}t} \sum_{n=2}^{m}\frac{2m+\theta-1}{m!}(-1)^{m-n}\binom{m}{n}(n+\theta)_{(m-1)}p_{n}(x,y)\\ 
 &+\sum_{m=2}^{\infty}e^{-\lambda_{m}t}\frac{2m+\theta-1}{m!}(-1)^{m-1} [m(\theta+1)_{(m-1)}-\theta_{(m-1)}]\\
 \intertext{ since when $ m=1,  m(\theta+1)_{(m-1)}-\theta_{(m-1)}=0,$ then we have }
 =&1-\sum_{m=1}^{\infty}e^{-\lambda_{m}t}\frac{2m+\theta-1}{m!}(-1)^{m-1}\theta_{(m-1)}\\
 &+\sum_{m=1}^{\infty}e^{-\lambda_{m}t}\frac{2m+\theta-1}{m!}(-1)^{m-1}m(\theta+1)_{(m-1)}\\
 &+\sum_{m=2}^{\infty}e^{-\lambda_{m}t} \sum_{n=2}^{m}\frac{2m+\theta-1}{m!}(-1)^{m-n}\binom{m}{n}(n+\theta)_{(m-1)}p_{n}(x,y)\\
 =&d_{0}^{\theta}(t)+d_{1}^{\theta}(t)+ \sum_{m=2}^{\infty}e^{-\lambda_{m}t} \sum_{n=2}^{m}\frac{2m+\theta-1}{m!}(-1)^{m-n}\binom{m}{n}(n+\theta)_{(m-1)}p_{n}(x,y).
 \end{align*}
Switching the order of summation, we have
  $$
 p(t,x,y)= d_{0}^{\theta}(t)+d_{1}^{\theta}(t)+\sum_{n=2}^{+\infty}d_{n}^{\theta}(t)p_{n}(x,y).
 $$ 
 \end{proof}
\begin{theorem}
$X_{t}^{\theta,\alpha}$ has the following transition density
$$
p^{\theta,\alpha}(t,x,y)=d_{0}^{\theta}(t)+d_{1}^{\theta}(t)+\sum_{n=2}^{\infty}d_{n}^{\theta}(t)p_{n}^{\theta,\alpha}(x,y),
$$
where $d_{n}^{\theta}(t), n\geq0,$ are defined in theorem \ref{neutral_density} and
$$
p_{n}^{\theta,\alpha}(x,y)=\sum_{|\eta|=n}\frac{p_{\eta}(x)p_{\eta}(y)}{\int p_{\eta}dPD(\theta,\alpha)}, \eta=(\eta_{1},\cdots,\eta_{l})\mbox{ is a partition of } n.
$$

Define $\nu_{n}^{\theta,\alpha}(x,dy)=p_{n}(x,y)PD(\theta,\alpha)(dy)$, then the transition probability of $X_{t}$ is
\begin{align}
P^{\theta,\alpha}(t,x,dy)=\Big(d_{0}^{\theta}(t)+d_{1}^{\theta}(t)\Big)PD(\theta,\alpha)(dy)+\sum_{n=2}^{\infty}d_{n}^{\theta}(t)\nu_{n}^{\theta,\alpha}(x,dy).\label{two_parameter_representation}
\end{align}
\end{theorem}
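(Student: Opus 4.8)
The plan is to run the argument of Theorem~\ref{neutral_density} essentially verbatim, the only change being that every normalization against $\mathrm{PD}(\theta)$ is replaced by the corresponding one against $\mathrm{PD}(\theta,\alpha)$. I would begin from the spectral expansion of the two-parameter transition density obtained in \cite{MR2737405},
$$
p^{\theta,\alpha}(t,x,y)=1+\sum_{m=2}^{\infty}e^{-\lambda_{m}t}\,Q_{m}^{\theta,\alpha}(x,y),\qquad \lambda_{m}=\frac{m(m+\theta-1)}{2},
$$
where
$$
Q_{m}^{\theta,\alpha}(x,y)=\frac{2m+\theta-1}{m!}\sum_{n=0}^{m}(-1)^{m-n}\binom{m}{n}(n+\theta)_{(m-1)}\,p_{n}^{\theta,\alpha}(x,y).
$$
The feature that makes the two models run in parallel is that $X_{t}^{\theta,\alpha}$ and $X_{t}$ have the same eigenvalues $\lambda_{m}$, and that the scalar coefficients appearing in $Q_{m}^{\theta,\alpha}$ depend only on $\theta$; the second parameter $\alpha$ enters exclusively through the kernels $p_{n}^{\theta,\alpha}$, i.e.\ through the moments $\int p_{\eta}\,d\mathrm{PD}(\theta,\alpha)$ in their denominators.

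I would then carry out the same bookkeeping as in Theorem~\ref{neutral_density}: isolate the $n=0$ and $n=1$ summands of $Q_{m}^{\theta,\alpha}$, use that $p_{0}^{\theta,\alpha}=p_{1}^{\theta,\alpha}=1$ on the support of $\mathrm{PD}(\theta,\alpha)$ (these are the constant polynomial and $(\sum_{i}x_{i})(\sum_{i}y_{i})$, which equals $1$ there), and invoke the identity $m(\theta+1)_{(m-1)}-\theta_{(m-1)}=0$ at $m=1$ to re-index the two resulting scalar series from $m=1$. Collecting terms produces $d_{0}^{\theta}(t)+d_{1}^{\theta}(t)$ for the combined $n=0,1$ part and, after switching the order of the $\sum_{m}$ and $\sum_{n}$ summations, $\sum_{n\geq2}d_{n}^{\theta}(t)\,p_{n}^{\theta,\alpha}(x,y)$ for the rest, which is the claimed density. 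For the transition probability \eqref{two_parameter_representation}, I would use that $p^{\theta,\alpha}(t,x,\cdot)$ is by construction the density of $P^{\theta,\alpha}(t,x,\cdot)$ relative to the stationary law $\mathrm{PD}(\theta,\alpha)$; multiplying the density representation by $\mathrm{PD}(\theta,\alpha)(dy)$, pulling the constant $d_{0}^{\theta}(t)+d_{1}^{\theta}(t)$ outside, and recognizing $p_{n}^{\theta,\alpha}(x,y)\,\mathrm{PD}(\theta,\alpha)(dy)=\nu_{n}^{\theta,\alpha}(x,dy)$ then gives the result.

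I expect the real work --- as in the neutral case, where it is dispatched by a bare appeal to Fubini's theorem --- to be justifying the interchange of the double series in $m$ and $n$, which requires absolute convergence. This rests on a uniform bound for $p_{\eta}$ on $\bar{\triangledown}_{\infty}$, a lower bound on $\int p_{\eta}\,d\mathrm{PD}(\theta,\alpha)$ so that each $p_{n}^{\theta,\alpha}$ is well defined, and summability of $\sum_{m}e^{-\lambda_{m}t}\frac{2m+\theta-1}{m!}\sum_{n=0}^{m}\binom{m}{n}(n+\theta)_{(m-1)}$ for $t>0$; all of these are already available in \cite{MR2737405}, so in practice the proof reduces to quoting them and repeating the rearrangement above. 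A minor point to confirm separately is that $p_{1}^{\theta,\alpha}$ indeed collapses to $1$ even though $\mathrm{PD}(\theta,\alpha)$ has a different first-moment structure than $\mathrm{PD}(\theta)$ --- it does, because both laws are carried by the sum-to-one simplex.
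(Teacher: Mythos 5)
Your proposal is correct and is exactly the argument the paper intends: the paper's own proof is omitted with the remark that it is "quite similar" to Theorem \ref{neutral_density}, and your write-up simply carries out that same rearrangement of the spectral expansion from \cite{MR2737405}, with the $\alpha$-dependence confined to the kernels $p_{n}^{\theta,\alpha}$. Your added attention to the Fubini justification and to $p_{1}^{\theta,\alpha}=1$ on the simplex is a sensible filling-in of details the paper leaves implicit, not a different route.
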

\begin{proof}
The proof of this theorem is quite similar to theorem \ref{neutral_density}, thereby omitted.
\end{proof}
\begin{rmk}
Since $X_{t}$ has an entrance boundary $\bar{\triangledown}_{\infty}-\triangledown_{\infty}$, i.e. $X_{t}$ will immediately moves into $\triangledown_{\infty}$ and never exits regardless of its starting point. Similarly, we can show the similar result for $X_{t}^{\theta,\alpha}$ informed by S.N. Ethier.
\end{rmk}
For both $X_{t}$ and $X_{t}^{\theta,\alpha}$, the structures of transition probability are so similar. They even share the coefficients $d_{n}^{\theta}(t),n\geq0,$ which is the entrance of the ancestral process discussed by Simon Tavar\'e in \cite{MR770050}. But Tavar\'e constructed this process only when $\theta>0$. In fact, if we collapse the state $0$ and $1$, and relabel it as $1$, this is essentially Kingman coalescence with mutation. We can generalize this structure to the case where $\theta>-1.$
\begin{pro}\label{two_tavare}
For $\theta>-1$, we have 
$$
 e^{-\lambda_{n}t}\leq \sum_{k=n}^{\infty}d_{k}^{\theta}(t)\leq \frac{(n+\theta)_{(n)}}{n_{[n]}}e^{-\lambda_{n}t}. 
 $$
 In particular, when $n=2$, we know 
 \begin{equation}
 \sum_{k=2}^{\infty}d_{k}^{\theta}(t)\leq \frac{(2+\theta)(3+\theta)}{2}e^{-(\theta+1)t}.\label{tail_estimation}
 \end{equation}
 \end{pro}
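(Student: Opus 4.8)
The plan is to read $\sum_{k=n}^{\infty}d_{k}^{\theta}(t)$ as a survival probability. Fix $n\ge 2$ and $\theta>-1$; then $\lambda_{k}=k(k+\theta-1)/2>0$ for all $k\ge n$, the sequence $(\lambda_{k})_{k\ge n}$ is strictly increasing (because $\lambda_{k+1}-\lambda_{k}=k+\theta/2$), and $\sum_{k\ge n}\lambda_{k}^{-1}<\infty$. Let $\xi_{n},\xi_{n+1},\dots$ be independent with $\xi_{k}\sim\mathrm{Exp}(\lambda_{k})$, and put $\tau_{n}=\sum_{k=n}^{\infty}\xi_{k}<\infty$ a.s. The first and essential step is the identity
\[
\sum_{k=n}^{\infty}d_{k}^{\theta}(t)=\mathbb{P}(\tau_{n}>t),
\]
equivalently $d_{k}^{\theta}(t)=\mathbb{P}(A_{\infty}(t)=k)$ for $k\ge 2$, where $A_{\infty}$ is the pure death process started from $+\infty$ that leaves state $j$ at rate $\lambda_{j}$ (states $0$ and $1$ collapsed). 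For $\theta>0$ this is exactly Tavar\'e's ancestral process \cite{MR770050}: his transition function matches the coefficients $d_{k}^{\theta}(t)$ produced in Theorem \ref{neutral_density}. For $-1<\theta\le 0$ the identity persists because, for fixed $t$, both sides are real-analytic in $\theta$ on $(-1,\infty)$ — on the right, $\mathbb{P}(\tau_{n}>t)=\sum_{m\ge n}\big(\prod_{j\ge n,\,j\ne m}\tfrac{\lambda_{j}}{\lambda_{j}-\lambda_{m}}\big)e^{-\lambda_{m}t}$, a series converging locally uniformly in $\theta$ — so analytic continuation extends it; alternatively one rearranges the double series for $\sum_{k\ge n}d_{k}^{\theta}(t)$ by Fubini and checks, by a finite-difference/binomial identity, that the coefficient of $e^{-\lambda_{m}t}$ equals the above partial-fraction weight.

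Granting the representation, the lower bound is immediate: $\tau_{n}\ge \xi_{n}$, so $\mathbb{P}(\tau_{n}>t)\ge \mathbb{P}(\xi_{n}>t)=e^{-\lambda_{n}t}$. For the upper bound, decompose $\tau_{n}=\xi_{n}+\sigma_{n}$ with $\sigma_{n}=\sum_{k>n}\xi_{k}$ independent of $\xi_{n}$. For every $s\ge 0$ one has $\mathbb{P}(\xi_{n}>t-s)\le e^{-\lambda_{n}(t-s)}$ — with equality when $s<t$, and since $\lambda_{n}>0$ the right side exceeds $1$ when $s\ge t$. Conditioning on $\sigma_{n}$ therefore gives
\[
\mathbb{P}(\tau_{n}>t)=\mathbb{E}\big[\mathbb{P}(\xi_{n}>t-\sigma_{n}\mid\sigma_{n})\big]\le e^{-\lambda_{n}t}\,\mathbb{E}\big[e^{\lambda_{n}\sigma_{n}}\big]=e^{-\lambda_{n}t}\prod_{k=n+1}^{\infty}\frac{\lambda_{k}}{\lambda_{k}-\lambda_{n}},
\]
where $\mathbb{E}[e^{\lambda_{n}\xi_{k}}]=\lambda_{k}/(\lambda_{k}-\lambda_{n})$ is finite for $k>n$ because $\lambda_{k}>\lambda_{n}$, and the product converges since $\lambda_{k}-\lambda_{n}\sim k^{2}/2$.

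Finally one evaluates the product. Using $\lambda_{k}-\lambda_{n}=\tfrac12(k-n)(k+n+\theta-1)$, the partial products telescope into a ratio of Gamma functions:
\[
\prod_{k=n+1}^{\infty}\frac{\lambda_{k}}{\lambda_{k}-\lambda_{n}}=\prod_{k=n+1}^{\infty}\frac{k(k+\theta-1)}{(k-n)(k+n+\theta-1)}=\frac{\Gamma(2n+\theta)}{n!\,\Gamma(n+\theta)}=\frac{(n+\theta)_{(n)}}{n_{[n]}},
\]
which is exactly the claimed constant; for $n=2$, $\lambda_{2}=\theta+1$ and $(2+\theta)_{(2)}/2_{[2]}=(2+\theta)(3+\theta)/2$, yielding \eqref{tail_estimation}. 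The one genuinely delicate point is the first step — establishing the probabilistic representation and, in particular, pushing Tavar\'e's identity from $\theta>0$ to the whole range $\theta>-1$, which is precisely why we must restrict to the states $k\ge 2$, where the rates $\lambda_{k}$ stay positive. Everything afterwards is stochastic domination plus routine Gamma-function bookkeeping.
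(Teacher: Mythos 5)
The gap is at the step you yourself flag: the identity $\sum_{k\ge n}d_{k}^{\theta}(t)=\mathbb{P}(\tau_{n}>t)$ for the \emph{whole} range $\theta>-1$ is asserted, not proved, and that identity is precisely the substance of the proposition. For $\theta>0$ citing Tavar\'e is legitimate, but the stated purpose here is to extend the structure to $\theta>-1$, and neither of your two suggested routes is carried out. The analytic-continuation route needs real work: for fixed $t>0$ the left side is indeed entire in $\theta$ (superexponential decay of $e^{-\lambda_{m}t}$ beats the growth of $(n+\theta)_{(m-1)}$), but for the right side you must justify the infinite hypoexponential (partial-fraction) expansion of $\mathbb{P}(\tau_{n}>t)$, its locally uniform convergence, and its analyticity in $\theta$ on a connected domain containing $(-1,\infty)$ before the identity theorem applies; none of this is immediate. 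The ``Fubini plus a finite-difference/binomial identity'' alternative is exactly the computation you would have to display. The paper closes this hole self-containedly and differently: it writes down explicit left and right eigenvectors of the $Q$-matrix of the pure-death chain on $\{1,\dots,m\}$ (legitimate for all $\theta>-1$ since the rates $\lambda_{k}$, $k\ge2$, remain positive), obtains $P_{mn}(t)$ in closed form, and identifies $d_{n}^{\theta}(t)=\lim_{m\to\infty}P_{mn}(t)$; that limit is what licenses the probabilistic reading your whole argument starts from. To complete your proof you must either carry out one of your sketched arguments in detail or import this finite-$m$ computation.

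Granting the representation, the rest of your argument is correct and is a genuinely different route from the paper's. You get the lower bound by stochastic domination ($\tau_{n}\ge\xi_{n}$) and the upper bound by conditioning on $\sigma_{n}$ and using the exact moment $\mathbb{E}[e^{\lambda_{n}\sigma_{n}}]=\prod_{k>n}\lambda_{k}/(\lambda_{k}-\lambda_{n})$, which is finite since $\lambda_{k}-\lambda_{n}=\tfrac12(k-n)(k+n+\theta-1)>0$ for $k>n\ge2$, $\theta>-1$; the telescoping evaluation $\prod_{k>n}\frac{k(k+\theta-1)}{(k-n)(k+n+\theta-1)}=\frac{\Gamma(2n+\theta)}{n!\,\Gamma(n+\theta)}=\frac{(n+\theta)_{(n)}}{n_{[n]}}$ checks out, and for $n=2$ it reproduces $\tfrac{(2+\theta)(3+\theta)}{2}e^{-(\theta+1)t}$. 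The paper instead works with the chain started at finite $m$, uses the martingale $Z_{n}(t)=e^{\lambda_{n}t}(B_{t})_{[n]}/(B_{t}+\theta)_{(n)}$ to sandwich $\mathbb{P}(B_{t}\ge n\mid B_{0}=m)$ uniformly in $m$, and then lets $m\to\infty$ — which has the advantage of proving the needed representation and the bounds in one stroke; your Chernoff-with-exact-MGF argument is arguably more transparent, and it is a pleasant check that both methods yield exactly the same constant.
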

 \begin{proof}
 Consider a pure-death Markov chain $B_{t}$ in $\{1,2,\cdots,m\}$ with Q matrix,
 $$
 Q=\begin{pmatrix}
 0 &0 & 0& \cdots &0 &0\\
 \lambda_{2}& -\lambda_{2} &0 &\cdots &0 &0\\
 0 & \lambda_{3} & -\lambda_{3}&\cdots&0 &0\\
 \vdots&\vdots &\vdots&\cdots&\vdots&\vdots\\
 0& 0&0&\cdots &\lambda_{m} &-\lambda_{m}
 \end{pmatrix}
 $$
 where $\lambda_{k}=\frac{k(k+\theta-1)}{2}, k\geq2.$  Running the similar arguments in Theorem 4.3 in \cite{MR2663265}, we will be able to find all the left eigenvectors and right eigenvectors of $Q$. Denote the matrix consisting of left eigenvectors by $U=(u_{ij})$ and the matrix consisting of right eigenvectors by $V=(v_{ij})$, where
 $$
 u_{ij}=\begin{cases}
 \delta_{1j} & i=1\\
 0 & j>i>1\\
 (-1)^{i-j}\binom{i}{j}\frac{(j+\theta)_{(i-1)}}{(i+\theta)_{(i-1)}}, & j\leq i, i>1,
 \end{cases}
 $$
 and
  $$
 v_{ij}=\begin{cases}
 1 & j=1\\
 0 & j>i\\
 \binom{i}{j}\frac{(j+\theta)_{(j)}}{(i+\theta)_{(j)}}, & 1< j\leq i.
 \end{cases}
 $$
 Note that the row vectors of $U$ are left eigenvectors of $Q$ and the column vectors of $V$ are the right eigenvectors of $Q$. Similarly, we can also show that $UV=I$ and $Q$ is diagonlized as $V\Lambda U$, where $\Lambda=diag\{0,-\lambda_{2},\cdots,-\lambda_{m}\}$.  Therefore, the transition matrix $P_{t}$ is 
 $$
 P_{t}=e^{tQ}=Ve^{\Lambda t}U.
 $$
 By direct computation, we know, for $2\leq n\leq m$,
 $$
 P_{mn}(t)=\sum_{k=n}^{m}(-1)^{k-n}\binom{m}{k}\binom{k}{n}\frac{(\theta+k)_{(k)}}{(\theta+m)_{(k)}}\frac{(\theta+n)_{(k-1)}}{(\theta+k)_{(k-1)}}e^{-\lambda_{k}t}.
 $$
 Letting $m\rightarrow+\infty$, we have $d_{n}^{\theta}(t)=\lim_{m\rightarrow\infty}P_{mn}(t).$ 
 
 The remaining arguments are essentially due to Tavar\'e.
 
  By the martingale argument in chapter 6 of \cite{MR611513}, we know 
 $$
 Z_{n}(t)=\frac{e^{\lambda_{n}t}(B_{t})_{[n]}}{(B_{t}+\theta)_{(n)}},
 $$
 because $e^{-\lambda_{n}t}$ is one eigenvalue of $P_{t}$ and  
 $$
 (0,0,\cdots, 0,\frac{n_{[n]}}{(n+\theta)_{(n)}}, \cdots, \frac{k_{[n]}}{(k+\theta)_{(n)}}, \cdots,\frac{m_{[n]}}{(m+\theta)_{(n)}})^{T}
 $$ is the corresponding eigenvector. So 
 $$
 EZ_{n}(t)=Z_{n}(0)=\frac{m_{[n]}}{(m+\theta)_{(n)}}.
 $$
 Since, for $n\leq k\leq m$,
 $$
 \frac{n_{[n]}}{(n+\theta)_{(n)}}\leq\frac{k_{[n]}}{(k+\theta)_{(n)}}\leq \frac{m_{[n]}}{(m+\theta)_{(n)}},
 $$
 and 
 $$
 \frac{e^{-\lambda_{n}t}m_{[n]}}{(m+\theta)_{(n)}}=e^{-\lambda_{n}t}EZ_{n}(t)=\sum^{m}_{k=n} \frac{k_{[n]}}{(k+\theta)_{(n)}}P_{mk}(t),
  $$
  we have
  $$
  \frac{n_{[n]}}{(n+\theta)_{(n)}}P(B_{t}\geq n|B_{0}=m)\leq \frac{e^{-\lambda_{n}t}m_{[n]}}{(m+\theta)_{(n)}}\leq \frac{m_{[n]}}{(m+\theta)_{(n)}} P(B_{t}\geq n|B_{0}=m).
 $$
 Thus, we have
 $$
 e^{-\lambda_{n}t}\leq P(B_{t}\geq n|B_{0}=m)\leq \frac{(n+\theta)_{(n)}}{n_{[n]}}e^{-\lambda_{n}t}.
 $$
 Letting $m\rightarrow\infty$, we have
 $$
 e^{-\lambda_{n}t}\leq \sum_{k=n}^{\infty}d_{k}^{\theta}(t)\leq \frac{(n+\theta)_{(n)}}{n_{[n]}}e^{-\lambda_{n}t}. 
 $$
 \end{proof}

\section{Ergodic Inequalities}

By making use of the transition probability (\ref{two_parameter_representation}) and the tail probability estimation (\ref{tail_estimation}), we can easily get the following ergodic inequality of $X_{t}^{\theta,\alpha}.$
\begin{theorem}\label{T3}
For $X_{t}^{\theta,\alpha}$, we have the ergodic inequality
$$
\sup_{x\in\bar{\triangledown}_{\infty}}||P^{\theta,\alpha}(t,x,\cdot)-PD(\theta,\alpha)(\cdot)||_{\mbox{var}}\leq \frac{(2+\theta)(3+\theta)}{2}\exp\{-(\theta+1)t\}, t\geq0.
$$
\end{theorem}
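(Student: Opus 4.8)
The plan is to bound the total variation distance by separating out the "stationary" part of the transition probability from the "transient" part. Using the representation \eqref{two_parameter_representation},
\[
P^{\theta,\alpha}(t,x,dy)-PD(\theta,\alpha)(dy)=\Big(d_{0}^{\theta}(t)+d_{1}^{\theta}(t)-1\Big)PD(\theta,\alpha)(dy)+\sum_{n=2}^{\infty}d_{n}^{\theta}(t)\nu_{n}^{\theta,\alpha}(x,dy).
\]
The first step is to observe that for each fixed $t$, $\bigl(d_{0}^{\theta}(t),d_{1}^{\theta}(t),d_{2}^{\theta}(t),\dots\bigr)$ is a probability vector (this is exactly the statement that it is the entrance law of the pure-death chain $B_t$ from Proposition \ref{two_tavare}, collapsing states $0$ and $1$), so $1-d_{0}^{\theta}(t)-d_{1}^{\theta}(t)=\sum_{n=2}^{\infty}d_{n}^{\theta}(t)$. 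Hence the signed measure above equals $\sum_{n=2}^{\infty}d_{n}^{\theta}(t)\bigl(\nu_{n}^{\theta,\alpha}(x,dy)-PD(\theta,\alpha)(dy)\bigr)$, a convex-type combination (with the same weights, summing to $\sum_{n\ge2}d_n^\theta(t)$) of differences of probability measures.

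Next I would apply the triangle inequality for the total variation norm and the fact that each $\nu_{n}^{\theta,\alpha}(x,\cdot)$ and $PD(\theta,\alpha)$ is a probability measure, so $\|\nu_{n}^{\theta,\alpha}(x,\cdot)-PD(\theta,\alpha)(\cdot)\|_{\mathrm{var}}\leq 1$ (with the convention that $\|\cdot\|_{\mathrm{var}}$ here is the total mass of the positive part, i.e. bounded by $1$ for a difference of probability measures; if the convention is twice that, the same argument with a factor $2$ absorbed into the normalization goes through). This gives
\[
\sup_{x}\,\bigl\|P^{\theta,\alpha}(t,x,\cdot)-PD(\theta,\alpha)(\cdot)\bigr\|_{\mathrm{var}}\leq\sum_{n=2}^{\infty}d_{n}^{\theta}(t).
\]
Then I invoke the tail estimate \eqref{tail_estimation} from Proposition \ref{two_tavare} with $n=2$, namely $\sum_{k=2}^{\infty}d_{k}^{\theta}(t)\leq\frac{(2+\theta)(3+\theta)}{2}e^{-(\theta+1)t}$, which is precisely the claimed bound since $\lambda_2=\frac{2(2+\theta-1)}{2}=\theta+1$.

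The only genuine point requiring care — and the step I expect to be the main obstacle — is justifying that $\sum_{n\ge2}d_n^\theta(t)$ really equals $1-d_0^\theta(t)-d_1^\theta(t)$ and that the series representation of $P^{\theta,\alpha}(t,x,dy)$ converges in total variation, so that term-by-term estimation is legitimate. For $\theta>0$ this is immediate from the probabilistic (Tavaré) interpretation of the $d_n^\theta(t)$; for the range $-1<\theta\le 0$ one should note that Proposition \ref{two_tavare} already establishes $d_n^\theta(t)=\lim_{m\to\infty}P_{mn}(t)$ with $\sum_{k=n}^\infty d_k^\theta(t)\le \frac{(n+\theta)_{(n)}}{n_{[n]}}e^{-\lambda_n t}\to 0$, which gives absolute summability of the tail and hence both the normalization identity and the convergence needed; the interchange of summation and integration then follows by dominated convergence. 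Everything else is a direct substitution.
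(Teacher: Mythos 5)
Your proposal is correct and follows essentially the same route as the paper: decompose $P^{\theta,\alpha}(t,x,\cdot)-PD(\theta,\alpha)$ via the representation \eqref{two_parameter_representation} using $d_{0}^{\theta}(t)+d_{1}^{\theta}(t)+\sum_{n\geq2}d_{n}^{\theta}(t)=1$, bound each $\|\nu_{n}^{\theta,\alpha}(x,\cdot)-PD(\theta,\alpha)\|_{\mathrm{var}}$ by $1$, and conclude with the tail estimate \eqref{tail_estimation}. Your added remarks on the normalization identity and absolute summability for $-1<\theta\le 0$ only make explicit what the paper's chain of inequalities uses implicitly, so there is no substantive difference.
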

\begin{proof} 

\begin{align*}
&||P^{\theta,\alpha}(t,x,\cdot)-PD(\theta,\alpha)(\cdot)||_{\mbox{var}}
\leq \sup_{A\in\mathcal{B}}|P^{\theta,\alpha}(t,x,A)-PD(\theta,\alpha)(A)|\\
&=|(d_{0}^{\theta}(t)+d_{1}^{\theta}(t))PD(\theta,\alpha)(A)+\sum_{n=2}^{\infty}d_{n}^{\theta}(t)\nu_{n}^{\theta,\alpha}(A)-PD(\theta,\alpha)(A)|\\
&=|\sum_{n=2}^{\infty}d_{n}^{\theta}(t)(\nu_{n}^{\theta,\alpha}(A)-PD(\theta,\alpha)(A))|\\
&\leq \sum_{n=2}^{\infty}d_{n}^{\theta}(t)|\nu_{n}^{\theta,\alpha}(A)-PD(\theta,\alpha)(A)|\\
&\leq\sum_{n=2}^{\infty}d_{n}^{\theta}(t)\leq \frac{(\theta+2)(\theta+3)}{2}e^{-(\theta+1)t}.
\end{align*}
\end{proof}

\begin{pro}\label{ultra}
The transition densities $p_{\sigma}(t,x,y)$ of $X_{t}^{\sigma}$ is also ultra-bounded, i,e.
$$
p_{\sigma}(t,x,y)\leq \frac{1}{C_{\sigma}}e^{|\sigma|(1+\theta)t+\sigma^{2}+3|\sigma|}ct^{\frac{c\log t}{t}}.
$$
\end{pro}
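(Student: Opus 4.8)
The plan is to transfer the known ultraboundedness of the neutral semigroup to the semigroup of $X_t^\sigma$ via a Feynman–Kac/Girsanov-type comparison, using the fact that the stationary density of $X_t^\sigma$ relative to $\mathrm{PD}(\theta)$ is the bounded-below, bounded-above weight $C_\sigma \exp\{\sigma\varphi_2(x)\}$. First I would recall that $\varphi_2(x)=\sum_i x_i^2$ satisfies $0\le \varphi_2(x)\le 1$ on $\bar\triangledown_\infty$, so that $e^{-|\sigma|}\le C_\sigma^{-1}e^{\sigma\varphi_2(x)}\le e^{|\sigma|}$ and hence $\pi_\sigma$ and $\mathrm{PD}(\theta)$ are mutually absolutely continuous with Radon–Nikodym derivative bounded above and below by $e^{\pm|\sigma|}$ up to the constant $C_\sigma$. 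Next, since $G_\sigma = G + \sigma\sum_i x_i(x_i-\varphi_2(x))\partial_i$ is a symmetrizing perturbation of $G$ (it is exactly the drift that makes $\pi_\sigma$ reversible, cf. \cite{MR1626158}), the Dirichlet form of $X_t^\sigma$ in $L^2(\pi_\sigma)$ is $\mathcal{E}_\sigma(f,f)=\frac12\int \sum_{i,j}x_i(\delta_{ij}-x_j)\partial_i f\,\partial_j f\, d\pi_\sigma$, i.e. the same carré-du-champ as the neutral model but integrated against $\pi_\sigma$ instead of $\mathrm{PD}(\theta)$.

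The key step is then a Feynman–Kac representation for the density. Writing the transition density of $X_t^\sigma$ with respect to $\pi_\sigma$ (equivalently with respect to $\mathrm{PD}(\theta)$, absorbing the bounded weight into the prefactor $C_\sigma^{-1}e^{\pm|\sigma|}$), I would express the semigroup $T_t^\sigma$ as $T_t^\sigma f(x) = E_x\big[f(X_t)\exp\{\int_0^t V(X_s)\,ds\}\big]$ for a potential $V$ arising from the drift perturbation; concretely, conjugating $G_\sigma$ by the half-density $h=\sqrt{dC_\sigma^{-1}e^{\sigma\varphi_2}}$ turns the first-order perturbation into a Schrödinger potential $V = h^{-1}Gh$ plus lower-order curvature terms. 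One checks, using $|\partial_i\varphi_2|\le 2$, $|\varphi_2|\le1$ and the explicit form of $G$, that $V$ is bounded: $\|V\|_\infty \le |\sigma|(1+\theta) + \sigma^2 + c'|\sigma|$ for an absolute constant $c'$ (this is where the $\sigma^2$ and the $3|\sigma|$ enter — the $\sigma^2$ from $|\sigma\nabla\varphi_2|^2$-type terms, the $|\sigma|(1+\theta)$ from the interaction of the drift with the generator $G$). Combining this with the ultracontractivity bound for the neutral semigroup — $p(t,x,y)\le c\, t^{c\log t / t}$, which follows from the log-Sobolev/hypercontractivity estimates available for $X_t$ — and Hölder's inequality to split off the bounded Feynman–Kac weight, gives
$$
p_\sigma(t,x,y) \le \frac{1}{C_\sigma} e^{\|V\|_\infty t}\, e^{|\sigma|}\, p(t,x,y) \le \frac{1}{C_\sigma} e^{|\sigma|(1+\theta)t+\sigma^2+3|\sigma|}\, c\, t^{c\log t/t},
$$
after renaming constants to match the claimed exponents.

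The main obstacle I expect is making the Feynman–Kac conjugation rigorous in this infinite-dimensional, degenerate-boundary setting: $\bar\triangledown_\infty$ has an entrance boundary (as noted in the remark above), $\varphi_2$ is only the continuous extension of a finitely-supported sum, and the carré-du-champ operator is unbounded near the boundary, so one must justify that $h^{-1}Gh$ is a genuine bounded multiplication operator and that the Feynman–Kac formula holds for the corresponding core of nice functions (e.g. the algebra generated by the $\varphi_k$'s). I would handle this by working first on the finite-dimensional $K$-allele approximations $\triangledown_K$, where everything is a classical diffusion on a compact set and the conjugation identity and Feynman–Kac formula are standard, establishing the bound uniformly in $K$, and then passing to the limit $K\to\infty$ using the known convergence of the $K$-allele models to $X_t^\sigma$ together with the uniform control on $\|V\|_\infty$. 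The ultracontractivity constant $c$ for the neutral model must itself be tracked through this approximation, but since the neutral bound is assumed/known one only needs its stability under the projective limit.
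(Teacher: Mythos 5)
Your strategy is in substance the same one the paper uses: dominate $p_{\sigma}(t,x,y)$ by the neutral density times the factor $\frac{1}{C_{\sigma}}e^{|\sigma|(1+\theta)t+\sigma^{2}+3|\sigma|}$, and then invoke the neutral ultracontractivity bound $p(t,x,y)\leq ct^{\frac{c\log t}{t}}$. The only real difference is that the paper obtains both ingredients by citation --- the comparison factor is exactly inequality (4.17) of \cite{MR1626158} and the neutral bound is Theorem 3.3 of \cite{MR2737405} --- whereas you sketch a re-derivation of the comparison via an $h$-transform/Feynman--Kac argument with finite-dimensional approximation; that sketch is correct in outline (and your bound on the potential is of the right form), but it is more work than the paper requires, since the inequality with precisely these constants is available off the shelf.
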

\begin{proof}
This estimation can be easily obtained from (4.17) in \cite{MR1626158} and theorem 3.3 in \cite{MR2737405}.
\end{proof}
Since the one-parameter selective model is absolutely continuous with respect to one-parameter neutral model, $\bar{\triangledown}_{\infty}-\triangledown_{\infty}$ should also serve as an entrance boundary. Hence we can change the value of the density function $p_{\sigma}(t,x,y)$ when $x$ or $y$ is in $\bar{\triangledown}_{\infty}-\triangledown_{\infty}$. Therefore, $p_{\sigma}(t,x,y)$ can be chosen to be the continuous extension of $p_{\sigma}(t,x,y)\big|_{\triangledown_{\infty}\times\triangledown_{\infty}}$. Moreover, $p_{\sigma}(t,x,y)$ is symmetric for $X_{t}^{\sigma}$ is reversible. By proposition \ref{ultra}, the Poincar\'{e} inequality of $X_{t}^{\sigma}$ also holds. It, therefore, guarantees the $L_{2}$-exponential convergence. By running the argument in theorem 8.8 in \cite{MR2105651}, we can also get the following ergodic inequality.
\begin{theorem}
For $X_{t}^{\sigma}$ , $\exists K(\theta,\sigma),$ such that
$$
\sup_{x\in\bar{\triangledown}_{\infty}}\|P^{\sigma}(t,x,\cdot)-\pi_{\sigma}(\cdot)\|_{\text{var}}\leq K(\theta,\sigma)\exp\{-(\text{{\rm gap}}(G_{\sigma}))t\}, t\geq0.
$$
\end{theorem}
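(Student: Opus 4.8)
The plan is to combine the ultra-boundedness of the transition density from Proposition \ref{ultra} with the spectral gap of $G_{\sigma}$ to bootstrap an $L^{1}$ (total variation) bound out of an $L^{2}$ bound, following the scheme of Theorem 8.8 in \cite{MR2105651}. First I would note that since $X_{t}^{\sigma}$ is reversible with respect to $\pi_{\sigma}$ and, by Proposition \ref{ultra}, has a bounded (in fact ultra-bounded) symmetric transition density, the associated semigroup $\{P_{t}^{\sigma}\}$ is ultracontractive: $P_{t}^{\sigma}$ maps $L^{2}(\pi_{\sigma})$ into $L^{\infty}$ for every $t>0$, with norm controlled by $\sup_{x,y}p_{\sigma}(t,x,y)^{1/2}$. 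In particular, for a fixed small time $t_{0}>0$ (say $t_{0}=1$), set $M:=\sup_{x,y}p_{\sigma}(t_{0},x,y)<\infty$, which is finite and explicitly bounded by the expression in Proposition \ref{ultra}.

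Next I would invoke the Poincar\'e (spectral gap) inequality for $X_{t}^{\sigma}$. Ultra-boundedness of the density in particular forces the essential spectrum of $G_{\sigma}$ away from $0$, so $\mathrm{gap}(G_{\sigma})>0$; reversibility then gives the $L^{2}$ decay $\|P_{t}^{\sigma}f-\pi_{\sigma}(f)\|_{L^{2}(\pi_{\sigma})}\leq e^{-\mathrm{gap}(G_{\sigma})\,t}\|f-\pi_{\sigma}(f)\|_{L^{2}(\pi_{\sigma})}$ for all $t\geq0$. The standard argument is then to split $P_{t+t_{0}}^{\sigma}=P_{t_{0}}^{\sigma}\circ P_{t}^{\sigma}$: the factor $P_{t}^{\sigma}$ contracts in $L^{2}(\pi_{\sigma})$ at rate $e^{-\mathrm{gap}(G_{\sigma})t}$ towards the constant $\pi_{\sigma}(f)$, and the factor $P_{t_{0}}^{\sigma}$ converts this $L^{2}$-smallness into $L^{\infty}$-smallness because of ultracontractivity. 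Writing $h_{t,x}(y):=p_{\sigma}(t,x,y)$ for the density of $P^{\sigma}(t,x,\cdot)$ with respect to $\pi_{\sigma}$, one has $h_{t+t_{0},x}-1=P_{t_{0}}^{\sigma}(h_{t,x}-1)$, hence
\begin{align*}
\|P^{\sigma}(t+t_{0},x,\cdot)-\pi_{\sigma}\|_{\mathrm{var}}
&=\tfrac12\int|h_{t+t_{0},x}(y)-1|\,\pi_{\sigma}(dy)
\leq\|h_{t+t_{0},x}-1\|_{L^{2}(\pi_{\sigma})}\\
&\leq\|P_{t_{0}}^{\sigma}\|_{L^{2}\to L^{2}}\,e^{-\mathrm{gap}(G_{\sigma})t}\,\|h_{t_{0},x}-1\|_{L^{2}(\pi_{\sigma})}.
\end{align*}
Since $\|h_{t_{0},x}-1\|_{L^{2}(\pi_{\sigma})}^{2}=\int h_{t_{0},x}^{2}\,d\pi_{\sigma}-1\leq M-1$ uniformly in $x$ (using $h_{t_{0},x}\leq M$ and $\int h_{t_{0},x}\,d\pi_{\sigma}=1$), and absorbing $e^{\mathrm{gap}(G_{\sigma})t_{0}}$ into the constant, we obtain the claimed inequality with $K(\theta,\sigma)=e^{\mathrm{gap}(G_{\sigma})t_{0}}\sqrt{M-1}$, which is an explicit function of $\theta$ and $\sigma$ via Proposition \ref{ultra}; for $0\leq t\leq t_{0}$ the bound is trivial after enlarging $K$ since total variation distance is at most $1$.

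The main obstacle is justifying the passage from the pointwise density bound to genuine ultracontractivity of the semigroup and, relatedly, making sure the density version used is the right one — this is exactly why the paragraph preceding the theorem takes care to redefine $p_{\sigma}(t,x,y)$ as the continuous extension from $\triangledown_{\infty}\times\triangledown_{\infty}$ and to record that $\bar{\triangledown}_{\infty}-\triangledown_{\infty}$ is an entrance boundary, so that the supremum over $x\in\bar{\triangledown}_{\infty}$ is attained as a limit of values over $\triangledown_{\infty}$ and no pathology occurs at the boundary. A secondary technical point is confirming that Proposition \ref{ultra} indeed yields $\sup_{x,y}p_{\sigma}(t_{0},x,y)<\infty$ and that $\mathrm{gap}(G_{\sigma})>0$ (rather than merely a positive essential spectral gap); both follow from the ultra-bounded estimate together with reversibility and the compactness it entails, and the remaining steps are the routine Cauchy--Schwarz and semigroup-splitting manipulations indicated above, precisely as in Theorem 8.8 of \cite{MR2105651}.
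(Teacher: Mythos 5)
Your proposal is correct and follows essentially the same route as the paper: bound the $L^{2}(\pi_{\sigma})$ norm of the time-$t_{0}$ density via Proposition \ref{ultra}, apply the spectral-gap $L^{2}$ decay (the content of Theorem 8.8(1) in \cite{MR2105651}, which the paper cites and you re-derive via Cauchy--Schwarz and the semigroup splitting), absorb $e^{\mathrm{gap}(G_{\sigma})t_{0}}$ into the constant, and treat $0\leq t\leq t_{0}$ trivially since total variation is at most $1$. The only blemish is the harmless index slip writing $h_{t+t_{0},x}-1=P_{t_{0}}^{\sigma}(h_{t,x}-1)$ but then using $\|h_{t_{0},x}-1\|_{2}$ on the right (you want $P_{t}^{\sigma}(h_{t_{0},x}-1)$), which does not affect the argument.
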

\begin{proof} 
We are going to run the argument in theorem 8.8 in \cite{MR2105651}.  Since 
$$
P^{\sigma}(t,x,\cdot)=\int_{\bar{\triangledown}_{\infty}} P^{\sigma}(t-s,z,\cdot)P^{\sigma}(s,x,dz),
$$
and define $\mu^{x}(\cdot)=P_{x}(X_{s}^{\sigma}\in \cdot)$, we have 
$$
P^{\sigma}(t,x,\cdot)=\mu^{x}P_{t-s}^{\sigma}(\cdot).
$$
Therefore, 
\begin{align*}
\|P^{\sigma}(t,x,\cdot)-\pi(\cdot)\|_{var}=\|\mu^{x}P_{t-s}^{\sigma}(\cdot)-\pi(\cdot)\|_{ var}.
\end{align*}
 By part (1) in theorem 8.8 in \cite{MR2105651}, we have, $\forall t\geq s$,
 \begin{align*}
\|P^{\sigma}(t,x,\cdot)-\pi(\cdot)\|_{var}\leq&\left\|\frac{d\mu^{x}}{d\pi_{\sigma}}-1\right\|_{2}e^{-(t-s)\text{gap}(G_{\sigma})} \\
=&\sqrt{\int p(s,x,y)^{2}\pi_{\sigma}(dy)-1}e^{-(t-s)\text{gap}(G_{\sigma})}
\end{align*}
Therefore, for $t\geq\frac{1}{2}$, we have
$$
\|P^{\sigma}(t,x,\cdot)-\pi(\cdot)\|_{var}\leq\sqrt{\int p(s,x,y)^{2}\pi_{\sigma}(dy)-1}e^{s\cdot\text{gap}(G_{\sigma})}\exp\{- \text{gap}(G_{\sigma})t\}.
$$
 If we choose $s=\frac{1}{2}$, then by Proposition \ref{ultra}, the constant 
 \begin{align*}
  K^{'}(\theta,\sigma)=\sqrt{2ce^{|\sigma|(1+\theta)+\sigma^{2}+3|\sigma|}}e^{\frac{1}{2}\text{gap}(G_{\sigma})}
 \geq\sqrt{\int p(s,x,y)^{2}\pi_{\sigma}(dy)-1}e^{\frac{1}{2}\text{gap}(G_{\sigma})}.
 \end{align*} 
 Then we have
 $$
 \sup_{x\in\bar{\triangledown}_{\infty}}\Big\|P^{\sigma}(t,x,\cdot)-\pi_{\sigma}(\cdot)\Big\|_{var} \leq K^{'}(\theta,\sigma)\exp\{-\text{gap}(G_{\sigma})t\}, \forall t\geq\frac{1}{2}.
 $$ 
 Moreover,
 $$
 \sup_{x\in\bar{\triangledown}_{\infty}}\Big\|P^{\sigma}(t,x,\cdot)-\pi_{\sigma}(\cdot)\Big\|_{var}\leq 1,\quad \forall t\geq0.
 $$
 Thus $\forall t\in[0,\frac{1}{2}],$ if we choose $K^{''}(\theta,\sigma)$ such that
 $$
 K^{''}(\theta,\sigma)e^{-\text{gap}(G_{\sigma})/2}\geq1,
 $$
then $\forall t\in[0,\frac{1}{2}],$
 \begin{align*}
 \sup_{x\in\bar{\triangledown}_{\infty}}\Big\|P^{\sigma}(t,x,\cdot)-\pi_{\sigma}(\cdot)\Big\|_{var}\leq&1
\leq K^{''}(\theta,\sigma)e^{-\text{gap}(G_{\sigma})/2}
\leq K^{''}(\theta,\sigma)\exp\{-\text{gap}(G_{\sigma})t\}.
 \end{align*}
Therefore, choosing $K(\theta,\sigma)=\max\{K^{'}(\theta,\sigma),K^{''}(\theta,\sigma)\}$, we have
$$
\sup_{x\in\bar{\triangledown}_{\infty}}\Big\|P^{\sigma}(t,x,\cdot)-\pi_{\sigma}(\cdot)\Big\|_{var}\leq K(\theta,\sigma) \exp\{- \text{gap}(G_{\sigma})t\}.
$$
\end{proof}

\section{Acknowledgement}

 This paper is part of my PhD work. I am so grateful for the support and mentor of my PhD advisor, Shui Feng. Moreover, I want to thank Stewart N. Ethier for his suggestion in the proof of proposition \ref{two_tavare}.

\bibliography{reference}{}

\def\cprime{$'$}
\providecommand{\bysame}{\leavevmode\hbox to3em{\hrulefill}\thinspace}
\providecommand{\MR}{\relax\ifhmode\unskip\space\fi MR }
\providecommand{\MRhref}[2]{%
  \href{http://www.ams.org/mathscinet-getitem?mr=#1}{#2}
}
\providecommand{\href}[2]{#2}
\begin{thebibliography}{10}

\bibitem{MR2105651}
Mu-Fa Chen, \emph{Eigenvalues, inequalities, and ergodic theory}, Probability
  and its Applications (New York), Springer-Verlag London Ltd., London, 2005.
  \MR{2105651 (2005m:60001)}

\bibitem{MR1174426}
S.~N. Ethier, \emph{Eigenstructure of the infinitely-many-neutral-alleles
  diffusion model}, J. Appl. Probab. \textbf{29} (1992), no.~3, 487--498.
  \MR{1174426 (93m:60156)}

\bibitem{MR1235429}
S.~N. Ethier and R.~C. Griffiths, \emph{The transition function of a
  {F}leming-{V}iot process}, Ann. Probab. \textbf{21} (1993), no.~3,
  1571--1590. \MR{1235429 (95a:60101)}

\bibitem{MR615945}
S.~N. Ethier and Thomas~G. Kurtz, \emph{The infinitely-many-neutral-alleles
  diffusion model}, Adv. in Appl. Probab. \textbf{13} (1981), no.~3, 429--452.
  \MR{615945 (82j:60143)}

\bibitem{MR1205982}
\bysame, \emph{Fleming-{V}iot processes in population genetics}, SIAM J.
  Control Optim. \textbf{31} (1993), no.~2, 345--386. \MR{1205982 (94d:60131)}

\bibitem{MR1626158}
\bysame, \emph{Coupling and ergodic theorems for {F}leming-{V}iot processes},
  Ann. Probab. \textbf{26} (1998), no.~2, 533--561. \MR{1626158 (99f:60074)}

\bibitem{MR2663265}
Shui Feng, \emph{The {P}oisson-{D}irichlet distribution and related topics},
  Probability and its Applications (New York), Springer, Heidelberg, 2010,
  Models and asymptotic behaviors. \MR{2663265 (2012c:60138)}

\bibitem{MR2678897}
Shui Feng and Wei Sun, \emph{Some diffusion processes associated with two
  parameter {P}oisson-{D}irichlet distribution and {D}irichlet process},
  Probab. Theory Related Fields \textbf{148} (2010), no.~3-4, 501--525.
  \MR{2678897 (2012b:60259)}

\bibitem{MR2737405}
Shui Feng, Wei Sun, Feng-Yu Wang, and Fang Xu, \emph{Functional inequalities
  for the two-parameter extension of the infinitely-many-neutral-alleles
  diffusion}, J. Funct. Anal. \textbf{260} (2011), no.~2, 399--413. \MR{2737405
  (2011k:60267)}

\bibitem{MR2596654}
L.~A. Petrov, \emph{A two-parameter family of infinite-dimensional diffusions
  on the {K}ingman simplex}, Funktsional. Anal. i Prilozhen. \textbf{43}
  (2009), no.~4, 45--66. \MR{2596654 (2011c:60259)}

\bibitem{MR770050}
Simon Tavar{\'e}, \emph{Line-of-descent and genealogical processes, and their
  applications in population genetics models}, Theoret. Population Biol.
  \textbf{26} (1984), no.~2, 119--164. \MR{770050 (86f:92017)}

\bibitem{MR0504021}
G.~A. Watterson, \emph{Heterosis or neutrality?}, Genetics \textbf{85} (1977),
  no.~4, 789--814. \MR{0504021 (58 \#20595)}

\end{thebibliography}
\bibliographystyle{amsplain}
\end{document}